\newtheorem{thm}{Theorem}
\newtheorem{lem}{Lemma}
\newtheorem{re}{Remark}
\newtheorem{defn}{Definition}
\newcommand{\R}{\mathbb{R}}
  \newlength{\titleright}
\begin{document}
\title{\vspace*{-6pc}{\bf Local existence and uniqueness of solutions for fractional differential problem with Hilfer-Hadamard fractional derivative}}

\vspace{1cm}\author{ {\small D. B. Dhaigude$^{1}$
      \footnote{Corresponding author. Email address: sandeeppb7@gmail@.com,  Tel:+91--9421475347}, Sandeep P. Bhairat$^{2}$}\ \\
{\footnotesize \it $^{1,2}$Department of mathematics, Dr. Babasaheb Ambedkar Marathwada University, Aurangabad--431 004 (M.S.) India.}\\}

\date{}

\maketitle

{\vspace*{0.5pc} \hrule\hrule\vspace*{2pc}}

\hspace{-0.8cm}{\bf Abstract}\\
This paper deals with the local existence and uniqueness results for the solution of fractional differential equations with Hilfer-Hadamrd fractional derivative. Using Picard's approximations and generalizing the restrictive conditions imposed on nonlinear function, the iterative scheme for uniformly approximating the solution is constructed.\\

\hspace{-0.8cm}{\it \footnotesize {\bf Keywords:}} {\small Picard iterative technique; singular fractional differential equation, convergence.}\\
{\it \footnotesize {\bf Mathematics Subject Classification}}:{\small 26A33; 26D10; 40A30; 34A08}.\\
\thispagestyle{empty}
\section{Introduction}
In this work, we are concerned with the Hilfer-Hadamard fractional derivative defined by \cite{kft}
\begin{equation}\label{hh}
(_{H}{\mathscr{D}}_{a^+}^{\alpha,\beta}f)(x)=(_{H}{\mathscr{I}}_{a^+}^{\beta(1-\alpha)}{_{H}{\mathscr{D}}_{a^+}^{\alpha+\beta(1-\alpha)}}f)(x),\qquad 0<\alpha<1,0\leq\beta\leq1,
\end{equation}
where $_{H}{\mathscr{I}}_{a^+}^{\beta(1-\alpha)}$ and ${_{H}{\mathscr{D}}_{a^+}^{\alpha+\beta(1-\alpha)}}$ are the Hadamard fractional integral of order $\beta(1-\alpha)$ and Hadamard fractional derivative of order $\alpha+\beta-\alpha\beta,$ respectively.

Analogous to the Hilfer derivative defined in \cite{hr}, Kassim M D, and N. E. Tatar introduced the Hilfer-Hadamard fractional derivative which interpolates between Hadamard fractional derivative (for $\beta=0$) and Caputo-Hadamard fractional derivative (for $\beta=1$), see \cite{kft}. In \cite{kt}, they established the well-posedness of Cauchy-type problem
\begin{equation}\label{h1}\begin{cases}
&{_{H}{\mathscr{D}}_{a^+}^{\alpha,\beta}x(t)}=f(t,x),\quad t>a>0,\\
&{_{H}{\mathscr{I}}_{a^+}^{1-\gamma}x(a)}=c,\qquad \gamma=\alpha+\beta(1-\alpha),
\end{cases}\end{equation}
where $c\in\R$ and ${_{H}{\mathscr{D}}_{a^+}^{\alpha,\beta}}$ is the HIlfer-Hadamard fractional derivative of order $\alpha (0<\alpha<1)$ and type $\beta (0\leq\beta\leq1),$ in the weighted space of continuous functions $C_{1-\gamma}^{\alpha,\beta}[a,b]$ defined by
\begin{equation}\label{a7}
C_{1-\gamma,\mu}^{\alpha,\beta}[a,b]=\big\{x\in C_{1-\gamma,\log}[a,b]|{_{H}{\mathscr{D}}_{a^+}^{\alpha,\beta}}x\in C_{\mu,\log}[a,b]\big\},\quad 0\leq\mu<1,\gamma=\alpha+\beta(1-\alpha),
\end{equation}
where
\begin{equation}\label{w1}
C_{\gamma,log}[a,b]=\bigg\{g:(a,b]\to\R|\big(\log{\frac{t}{a}}\big)^{\gamma}g(t)\in C[a,b]\bigg\}\quad 0\leq\gamma<1.
\end{equation}
They established the equivalence of initial value problem (IVP) \eqref{h1} with following Volterra integral equation of second kind:
\begin{equation}\label{a9}
x(t)=\frac{c}{\Gamma(\gamma)}\big(\log{\frac{t}{a}}\big)^{\gamma-1}+\frac{1}{\Gamma(\alpha)}\int_{a}^{t}\big(\log{\frac{t}{s}}\big)^{\alpha-1}f(s,x(s))\frac{ds}{s}, \quad t>a, \, c\in\R,
\end{equation}
and using the Banach fixed point theorem, following existence result for IVP \eqref{h1} is proved in \cite{kt}.
\begin{thm}\cite{kt}
Let $\gamma=\alpha+\beta-\alpha\beta$ where $(0<\alpha<1,0\leq\beta\leq1).$ Assume that $f:(a,b]\times\R\to\R,(a>0),$ is a function such that $f[\cdot,x(\cdot)]\in{C_{\mu,\log}[a,b]}$ for any $x\in{C_{\mu,\log}[a,b]}$ with $1-\gamma\leq\mu<1-\beta(1-\alpha)$ and is Lipschitz continuous with respect to its second variable. Then, there exists a unique solution $x$ for the Cauchy-type problem \eqref{h1} in the space $C_{1-\gamma,\mu}^{\alpha,\beta}[a,b].$
\end{thm}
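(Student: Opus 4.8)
The plan is to turn the Cauchy problem \eqref{h1} into a fixed-point problem for the Volterra equation \eqref{a9}; since \cite{kt} already establishes that \eqref{h1} and \eqref{a9} have the same solutions in $C_{1-\gamma,\log}[a,b]$, it suffices to produce a unique solution of \eqref{a9}. On the Banach space $C_{1-\gamma,\log}[a,b]$, equipped with the weighted norm $\|g\|=\max_{t\in[a,b]}\big|(\log\frac{t}{a})^{1-\gamma}g(t)\big|$, I would define
\[
(Tx)(t)=\frac{c}{\Gamma(\gamma)}\Big(\log\tfrac{t}{a}\Big)^{\gamma-1}+\frac{1}{\Gamma(\alpha)}\int_{a}^{t}\Big(\log\tfrac{t}{s}\Big)^{\alpha-1}f(s,x(s))\,\frac{ds}{s}.
\]
The first task is to verify that $T$ maps $C_{1-\gamma,\log}[a,b]$ into itself. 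In the first term the singular factor $(\log\frac{t}{a})^{\gamma-1}$ is exactly cancelled by the weight. For the integral term, the hypothesis $f[\cdot,x(\cdot)]\in C_{\mu,\log}[a,b]$ gives $|f(s,x(s))|\le M(\log\frac{s}{a})^{-\mu}$, and after multiplying by the weight the resulting power of $\log\frac{t}{a}$ equals $\alpha-\mu+1-\gamma=1-\mu-\beta(1-\alpha)$; it is precisely the upper bound $\mu<1-\beta(1-\alpha)$ that keeps this exponent positive and hence renders $Tx$ continuous up to $t=a$.

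The decisive ingredient is the Hadamard analogue of the Beta integral,
\[
\int_{a}^{t}\Big(\log\tfrac{t}{s}\Big)^{\alpha-1}\Big(\log\tfrac{s}{a}\Big)^{\sigma-1}\frac{ds}{s}=\Big(\log\tfrac{t}{a}\Big)^{\alpha+\sigma-1}\frac{\Gamma(\sigma)\Gamma(\alpha)}{\Gamma(\alpha+\sigma)},\qquad \sigma>0.
\]
Bounding $|x(s)-y(s)|\le(\log\frac{s}{a})^{\gamma-1}\|x-y\|$ and using the Lipschitz constant $L$ of $f$, this identity with $\sigma=\gamma$ yields
\[
\|Tx-Ty\|\le L\,\frac{\Gamma(\gamma)}{\Gamma(\alpha+\gamma)}\Big(\log\tfrac{b}{a}\Big)^{\alpha}\|x-y\|.
\]
The main obstacle is that this one-step constant need not lie below $1$, so $T$ itself may fail to be a contraction on the whole interval $[a,b]$.

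To overcome this I would iterate $T$ and track the constants by induction, each step applying the Beta identity with $\sigma=n\alpha+\gamma$; the telescoping of the Gamma factors produces
\[
\|T^{n}x-T^{n}y\|\le\frac{\big(L(\log\frac{b}{a})^{\alpha}\big)^{n}\,\Gamma(\gamma)}{\Gamma(n\alpha+\gamma)}\,\|x-y\|.
\]
Since $\Gamma(n\alpha+\gamma)$ grows faster than any geometric sequence, this coefficient tends to $0$, so $T^{n}$ is a contraction for all large $n$. By the generalized contraction principle, a contractive power of $T$ forces $T$ to possess a unique fixed point $x^{\ast}\in C_{1-\gamma,\log}[a,b]$, which by the equivalence is the unique solution of \eqref{h1}.

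It remains to place $x^{\ast}$ in the finer space $C_{1-\gamma,\mu}^{\alpha,\beta}[a,b]$ of \eqref{a7}. Applying ${}_{H}\mathscr{D}_{a^+}^{\alpha,\beta}$ to \eqref{a9} and using the equivalence gives ${}_{H}\mathscr{D}_{a^+}^{\alpha,\beta}x^{\ast}(t)=f(t,x^{\ast}(t))$; since $f[\cdot,x^{\ast}(\cdot)]\in C_{\mu,\log}[a,b]$ by hypothesis (with $1-\gamma\le\mu<1-\beta(1-\alpha)$), we obtain ${}_{H}\mathscr{D}_{a^+}^{\alpha,\beta}x^{\ast}\in C_{\mu,\log}[a,b]$, so $x^{\ast}\in C_{1-\gamma,\mu}^{\alpha,\beta}[a,b]$, which completes the argument.
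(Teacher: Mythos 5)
This theorem is quoted in the paper from the reference [kt] and no proof of it is given in the present manuscript, so there is no in-paper argument to compare against line by line; the paper only records that the result ``is proved in [kt]'' via the equivalence of \eqref{h1} with the Volterra equation \eqref{a9} and the Banach fixed point theorem. Your proposal is correct and follows the same general strategy (weighted space $C_{1-\gamma,\log}[a,b]$, the integral operator $T$ from \eqref{a9}, the Hadamard Beta identity, invariance from $\mu<1-\beta(1-\alpha)$, and the final upgrade to $C_{1-\gamma,\mu}^{\alpha,\beta}[a,b]$ by reading off ${}_{H}\mathscr{D}_{a^+}^{\alpha,\beta}x^{\ast}=f(\cdot,x^{\ast}(\cdot))\in C_{\mu,\log}[a,b]$). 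Where you genuinely diverge from the source is in how the non-contractivity of $T$ on all of $[a,b]$ is handled: the proof in [kt], following the Kilbas--Srivastava--Trujillo template, partitions $[a,b]$ into finitely many subintervals on each of which the one-step Lipschitz constant is below $1$ and then patches the local solutions together, whereas you iterate $T$ globally and exploit the estimate $\|T^{n}x-T^{n}y\|\le \big(L(\log\frac{b}{a})^{\alpha}\big)^{n}\Gamma(\gamma)\Gamma(n\alpha+\gamma)^{-1}\|x-y\|$ together with the fixed-point theorem for maps with a contractive power. Your route is cleaner in that it avoids the bookkeeping of re-initializing the problem on each subinterval, at the cost of needing the telescoping Gamma-factor induction; both arguments rest on the same Beta identity and deliver the same conclusion. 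The only points you pass over lightly are (i) the continuity of $(\log\frac{t}{a})^{1-\gamma}(Tx)(t)$ for $t>a$ (not just at $t=a$), which needs the standard mapping property of ${}_{H}\mathscr{I}_{a^+}^{\alpha}$ on $C_{\mu,\log}[a,b]$, and (ii) the fact that the equivalence of \eqref{h1} with \eqref{a9} is itself a nontrivial lemma of [kt] that you are entitled to cite but not to treat as automatic; neither is a genuine gap.
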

We also point out that, when $f(t,x(t))\geq(\log{\frac{t}{a}})^{\mu}|x(t)|^{m}$ for some $m>1$ and $\mu\in\R,$ a nonexistence for global solutions of problem
\begin{equation}\label{h2}\begin{cases}
&{_{H}{\mathscr{D}}_{a^+}^{\alpha,\beta}x(t)}\geq(\log{\frac{t}{a}})^{\mu}|x(t)|^{m},\quad t>a>0,m>1,\mu\in\R,\\
&{_{H}{\mathscr{I}}_{a^+}^{1-\gamma}x(a)}=c,\qquad \gamma=\alpha+\beta(1-\alpha),
\end{cases}
\end{equation}
is proved in the following theorem.
\begin{thm}\cite{kft}
Assume that $\mu\in\R$ and $m<(1+\mu)/(1-\gamma).$ Then, the problem \eqref{h2} does not admit global nontrivial solutions in $C_{1-\gamma,\log}^{\gamma}[a,b],$ where $C_{1-\gamma,\log}^{\gamma}[a,b]=\{y\in C_{1-\gamma,\log}[a,b]:{_{H}{\mathscr{D}}_{a^+}^{\gamma}}C_{1-\gamma,\log}[a,b]\}$ and $c\geq0.$
\end{thm}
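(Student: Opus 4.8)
The plan is to argue by contradiction with the nonlinear capacity (rescaled test-function) method, using the equivalent integral form. Suppose $x\in C_{1-\gamma,\log}^{\gamma}[a,b]$ is a nontrivial global solution of \eqref{h2}. Applying $_{H}\mathscr{I}_{a^+}^{\alpha}$ to the inequality and invoking the analogue of \eqref{a9} gives, since $c\ge0$,
\[
x(t)\ge\frac{c}{\Gamma(\gamma)}\Big(\log\tfrac{t}{a}\Big)^{\gamma-1}+\frac{1}{\Gamma(\alpha)}\int_a^t\Big(\log\tfrac{t}{s}\Big)^{\alpha-1}\Big(\log\tfrac{s}{a}\Big)^{\mu}|x(s)|^{m}\,\frac{ds}{s}\ge0,
\]
so $x\ge0$ and $x$ inherits the singular lower bound $x(t)\ge\frac{c}{\Gamma(\gamma)}(\log\frac{t}{a})^{\gamma-1}$ dictated by the datum. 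It is convenient to first record the substitution $\tau=\log(t/a)$, under which every Hadamard operator becomes the corresponding Riemann--Liouville/Hilfer operator in $\tau$ and $dt/t\mapsto d\tau$; writing $g(\tau)=x(ae^{\tau})$ this turns \eqref{h2} into $D_{0^+}^{\alpha,\beta}g\ge\tau^{\mu}g^{m}$ with $I_{0^+}^{1-\gamma}g(0)=c$ and $g(\tau)\sim\frac{c}{\Gamma(\gamma)}\tau^{\gamma-1}$ near $\tau=0$.

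Next I would fix $T>0$, choose a cut-off $\psi_T(\tau)=\Phi(\tau/T)^{\lambda}$ with $\Phi$ smooth, $\Phi\equiv1$ near $0$, $\mathrm{supp}\,\Phi\subset[0,1)$ and $\lambda$ large, multiply $D_{0^+}^{\alpha,\beta}g\ge\tau^{\mu}g^{m}$ by $\psi_T$ and integrate over $[0,T]$. Writing $D_{0^+}^{\alpha,\beta}=I_{0^+}^{\gamma-\alpha}\frac{d}{d\tau}I_{0^+}^{1-\gamma}$ and using the identity $\int_0^T(I_{0^+}^{\sigma}u)v\,d\tau=\int_0^T u\,(I_{T^-}^{\sigma}v)\,d\tau$ together with one ordinary integration by parts, the derivative transfers onto $\psi_T$ as the right Hilfer derivative $D_{T^-}^{\alpha,\beta}\psi_T$ and leaves a boundary term at $\tau=0$ equal to $-c\,(I_{T^-}^{\gamma-\alpha}\psi_T)(0)$. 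Because $c\ge0$ and $(I_{T^-}^{\gamma-\alpha}\psi_T)(0)=\frac{1}{\Gamma(\gamma-\alpha)}\int_0^T\rho^{\gamma-\alpha-1}\psi_T\,d\rho\ge0$, this term has the cooperating sign --- this is precisely where the hypothesis $c\ge0$ enters. Applying Young's inequality with exponents $m$ and $m'=m/(m-1)$ to $\int_0^T g\,(D_{T^-}^{\alpha,\beta}\psi_T)\,d\tau$ absorbs a copy of $\int_0^T\tau^{\mu}g^{m}\psi_T\,d\tau$ into the left-hand side and leaves the capacity integral
\[
\mathcal{K}_T=\int_0^T\tau^{-\mu/(m-1)}\,|D_{T^-}^{\alpha,\beta}\psi_T|^{m'}\,\psi_T^{-1/(m-1)}\,d\tau,
\]
so that $\int_0^T\tau^{\mu}g^{m}\psi_T\,d\tau\le C\,\mathcal{K}_T$.

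The decisive step is the scaling of $\mathcal{K}_T$. Here the homogeneity is governed not by the nominal order $\alpha$ but by the singular weight $\tau^{\gamma-1}$ carried by $g$ at the origin: the nonlinear term $\tau^{\mu}g^{m}$ is integrable near $0$ exactly when $\mu+m(\gamma-1)>-1$, i.e. $m<(1+\mu)/(1-\gamma)$, and with a cut-off adapted to this weight one finds $\mathcal{K}_T\le C\,T^{\theta}$, $\theta=\frac{m(1-\gamma)-(1+\mu)}{m-1}$, which is strictly negative under the hypothesis. Letting $T\to\infty$ then forces $\int_0^{\infty}\tau^{\mu}g^{m}\,d\tau=0$, hence $g\equiv0$ and $x\equiv0$, contradicting nontriviality.

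The step I expect to be the real obstacle is exactly this last balance: producing the threshold with $\gamma=\alpha+\beta(1-\alpha)$ rather than the operator order $\alpha$. A naive transfer of the whole derivative onto a cut-off that is flat at the origin scales like $T^{-\alpha}$ and yields only the weaker range $m<(1+\mu)/(1-\alpha)$; the divergence of $\mathcal{K}_T$ at $\tau=0$ when $\mu\ge m-1$ is the analytic signal that the cut-off must interact with the $\tau^{\gamma-1}$ singularity, and it is this interaction --- equivalently, coupling the Volterra lower bound $g\ge\frac{c}{\Gamma(\gamma)}\tau^{\gamma-1}$ with the boundary contribution $c\,(I_{T^-}^{\gamma-\alpha}\psi_T)(0)\sim c\,T^{\gamma-\alpha}$ --- that upgrades $\alpha$ to $\gamma$. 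Equivalently one may bypass the test function and run the monotone Picard iteration $g\ge g_{k+1}=\frac{c}{\Gamma(\gamma)}\tau^{\gamma-1}+N g_k$, where $N$ is the Volterra operator above; the first iterate $N(\frac{c}{\Gamma(\gamma)}\tau^{\gamma-1})$ is finite iff $m<(1+\mu)/(1-\gamma)$, and the care then lies in tracking the exponents $p_{k+1}=mp_k+(\alpha+\mu)$ and amplitudes to show the iterates force blow-up at finite $\tau$. Pinning down this $\gamma$-dependent scaling, and verifying the fractional integration-by-parts identity with its boundary term, is where the genuine work sits.
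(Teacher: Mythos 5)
This theorem is quoted in the paper from \cite{kft} and is stated there without any proof, so there is no in-paper argument to compare yours against; I can only judge the proposal on its own terms. Your overall strategy --- reduce the Hadamard operators to Riemann--Liouville ones via $\tau=\log(t/a)$, pass to a weak formulation against a rescaled cut-off, transfer the Hilfer derivative onto the test function through the fractional integration-by-parts identity, exploit the sign of the boundary term produced by $c\ge 0$, and close with Young's inequality --- is indeed the nonlinear-capacity method used in the cited source, and the reduction, the splitting $D_{0^+}^{\alpha,\beta}=I_{0^+}^{\gamma-\alpha}\frac{d}{d\tau}I_{0^+}^{1-\gamma}$, and the sign analysis of the boundary term are set up correctly.

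However, the proposal has a genuine gap at precisely the step you yourself flag as ``the real obstacle,'' and flagging it is not the same as closing it. As you compute, a cut-off that is flat at the origin yields $\mathcal{K}_T\le CT^{\theta}$ only with $\theta=\frac{m(1-\alpha)-(1+\mu)}{m-1}$, hence only the range $m<(1+\mu)/(1-\alpha)$; the stronger bound $\theta=\frac{m(1-\gamma)-(1+\mu)}{m-1}$ is asserted (``with a cut-off adapted to this weight one finds\ldots'') but never derived, and no explicit adapted test function is exhibited. Moreover, both mechanisms you propose for upgrading $\alpha$ to $\gamma$ --- coupling with the boundary contribution $c\,(I_{T^-}^{\gamma-\alpha}\psi_T)(0)\sim c\,T^{\gamma-\alpha}$, and the monotone iteration started from $\frac{c}{\Gamma(\gamma)}\tau^{\gamma-1}$ --- degenerate entirely when $c=0$, whereas the theorem asserts nonexistence of nontrivial solutions for every $c\ge 0$. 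Finally, the divergence of $\mathcal{K}_T$ at $\tau=0$ when $\mu\ge m-1$ is acknowledged but not repaired. Until the $\gamma$-dependent capacity estimate is actually established --- for instance by using that membership in $C_{1-\gamma,\log}^{\gamma}[a,b]$ forces the behaviour $x(t)\sim(\log(t/a))^{\gamma-1}$ near $t=a$ and building that weight into both the test function and the Young step --- the argument as written proves only the weaker statement with $1-\alpha$ in place of $1-\gamma$.
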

Recently, in a survey paper \cite{abl}, Said Abbas, et.al. obtained the results concerning the existence and uniqueness of weak solutions for some classes of Hadamard and Hilfer fractional differential equations. Further, some attractivity and Ulam stability results are obtained by applying the fixed point theory. Authors in \cite{db1}-\cite{db3} obtained the existence, uniqueness and continuations results by using both successive approximations and fixed point techniques for the solution of fractional IVP involving Hilfer fractional derivative defined in \cite{hr}.

We find that the existence and uniqueness results were proved, but the iterative scheme for uniformly approximating the solution of IVP \eqref{h1} was not given in (Theorem 21 \cite{kt}). Generally, finding the solution of nonlinear fractional differential equation is not an easy task. So the numerical treatment for such problems practically more sounds.

Motivated by this work, to avoid the ambiguity of fixed point theory, we adopted the method of successive approximations.
In this paper, we will study the IVP for fractional differential equation
\begin{equation}\label{s1}\begin{cases}
&{_{H}{\mathscr{D}}_{1}^{\alpha,\beta}}x(t)=f(t,x),\qquad \,\, 0<\alpha<1,0\leq\beta\leq1,\\
&\lim_{t\to{1}}\big(\log{t}\big)^{1-\gamma}x(t)=x_0,\quad \gamma=\alpha+\beta(1-\alpha).
\end{cases}
\end{equation}
Using some well-known convergence criterion and Picard's sequence functions \cite{km},\cite{yy}, we establish the existence and uniqueness results of IVP \eqref{s1}. The computable iterative scheme as well as the uniform convergence criterion for the solution are also developed. Note that the initial value considered in IVP \eqref{s1} is more suitable than that of considered in IVP \eqref{h1} and nonlinear function $f$ may be singular at $t=1.$

The rest of the paper is organised as follows: the next section covers the useful prerequisites. The main results are proved in section 3. Conclusion is given in the last section.

\section{Preliminaries}
We need the following basic definitions and properties from fractional calculus in the sequel, see \cite{kst}.

Let the Euler's gamma function $\Gamma(\cdot)$ defined by \cite{lv}
\begin{equation*}
\Gamma(x)=\int_{0}^{+\infty}s^{x-1}e^{-s}ds,\quad x>0.
\end{equation*}
\begin{defn}\cite{kst}
Let $(1,b),1<b\leq\infty,$ be a finite or infinite interval of the half-axis ${\R}^+$ and let $\alpha>0.$ The left-sided Hadamard fractional integral ${_{H}{\mathscr{I}}_{1}^{\alpha}f}$ of order $\alpha>0$ is defined by
\begin{equation}\label{hi}
(_{H}{\mathscr{I}}_{1}^{\alpha}f)(t)=\frac{1}{\Gamma(\alpha)}\int_{1}^{t}(\log{t})^{\alpha-1}\frac{f(s)ds}{s},\quad 1<t<b,
\end{equation}
provided that the integral exists. When $\alpha=0,$ we set ${_{H}{\mathscr{I}}_{1}^{0}f=f.}$
\end{defn}
\begin{defn}\cite{kst}
The left-sided Hadamard fractional derivative of order $\alpha(0\leq\alpha<1)$ on $(1,b)$ is defined by
\begin{equation}\label{hd}
(_{H}{\mathscr{D}}_{1}^{\alpha}f)(t)=\delta(_{H}{\mathscr{I}}_{1}^{1-\alpha}f)(t),\qquad 1<t<b,
\end{equation}
where $\delta=t(d/dt).$ In particular, when $\alpha=0$ we have $_{H}{\mathscr{D}}_{1}^{0}f=f.$
\end{defn}
\begin{defn}\cite{kst}
Let $(1,b)$ be a finite interval of the half-axis ${\R}^{+}.$ The fractional derivative $_{H}^{c}{\mathscr{D}}_{1}^{\alpha}f$ of order $\alpha(0<\alpha<1)$ on $(1,b)$ defined by
\begin{equation}\label{chd}
_{H}^{c}{\mathscr{D}}_{1}^{\alpha}f={_{H}{\mathscr{I}}_{1}^{1-\alpha}\delta f},
\end{equation}
is called the left-sided Hadamard-Caputo fractional derivative of order $\alpha$ of a function $f.$
\end{defn}
\begin{defn}\cite{kt}
The left-sided Hilfer-Hadamard fractional derivative of order $\alpha(0<\alpha<1)$ and type $\beta(0\leq\beta\leq1)$ with respect to $t$ is defined by
\begin{equation}\label{hh}
(_{H}{\mathscr{D}}_{1}^{\alpha,\beta}f)(t)=(_{H}{\mathscr{I}}_{1}^{\beta(1-\alpha)}{_{H}{\mathscr{D}}_{1}^{\alpha+\beta(1-\alpha)}}f)(t)
\end{equation}
of functions $f$ for which the expression on the right hand side exists, where ${_{H}{\mathscr{D}}_{1}^{\alpha+\beta(1-\alpha)}}$ is the Hadamard fractional derivative.
\end{defn}
\begin{lem}\cite{kst}
If $\alpha>0,\beta>0$ and $1<b<\infty,$ then
\begin{align}\label{pr}
\big({_{H}{\mathscr{I}}_{1}^{\alpha}}\big(\log{s}\big)^{\beta-1}\big)(t)&=\frac{\Gamma(\beta)}{\Gamma(\alpha+\beta)}\big(\log{t}\big)^{\beta+\alpha-1},\\
\big({_{H}{\mathscr{D}}_{1}^{\alpha}}\big(\log{s}\big)^{\beta-1}\big)(t)&=\frac{\Gamma(\beta)}{\Gamma(\beta-\alpha)}\big(\log{t}\big)^{\beta-\alpha-1}.
\end{align}
\end{lem}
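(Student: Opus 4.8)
The plan is to prove the two displayed identities in turn, obtaining the second as an easy consequence of the first together with the definition \eqref{hd} of the Hadamard derivative. For the integral identity I would substitute $f(s)=(\log s)^{\beta-1}$ directly into the definition \eqref{hi}, which gives
\begin{equation*}
\big({_{H}{\mathscr{I}}_{1}^{\alpha}}(\log s)^{\beta-1}\big)(t)=\frac{1}{\Gamma(\alpha)}\int_{1}^{t}\Big(\log\frac{t}{s}\Big)^{\alpha-1}(\log s)^{\beta-1}\frac{ds}{s}.
\end{equation*}
The guiding idea is that the Hadamard kernel is the multiplicative analogue of the Riemann--Liouville kernel, so the right change of variable should turn this into an Euler Beta integral.

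Concretely, I would set $u=\log s/\log t$, so that $\log s=u\log t$, $\log(t/s)=(1-u)\log t$, and $ds/s=\log t\,du$, with $u$ running from $0$ to $1$ as $s$ runs from $1$ to $t$. Factoring the powers of $\log t$ out of the integral leaves
\begin{equation*}
\frac{(\log t)^{\alpha+\beta-1}}{\Gamma(\alpha)}\int_{0}^{1}u^{\beta-1}(1-u)^{\alpha-1}\,du=\frac{(\log t)^{\alpha+\beta-1}}{\Gamma(\alpha)}\,B(\beta,\alpha),
\end{equation*}
and the identity $B(\beta,\alpha)=\Gamma(\beta)\Gamma(\alpha)/\Gamma(\alpha+\beta)$ collapses this to the asserted expression $\frac{\Gamma(\beta)}{\Gamma(\alpha+\beta)}(\log t)^{\alpha+\beta-1}$. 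The hypotheses $\alpha>0$ and $\beta>0$ are exactly what is needed for the Beta integral to converge at both endpoints, so this is the only place the positivity assumptions enter.

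For the derivative identity, restricting first to $0<\alpha<1$ as in \eqref{hd}, I would apply the integral formula just proved with order $1-\alpha$ in place of $\alpha$, obtaining $\big({_{H}{\mathscr{I}}_{1}^{1-\alpha}}(\log s)^{\beta-1}\big)(t)=\frac{\Gamma(\beta)}{\Gamma(\beta-\alpha+1)}(\log t)^{\beta-\alpha}$, and then act with $\delta=t\,(d/dt)$. Since $\delta(\log t)^{\beta-\alpha}=(\beta-\alpha)(\log t)^{\beta-\alpha-1}$, the constant becomes $(\beta-\alpha)\Gamma(\beta)/\Gamma(\beta-\alpha+1)$, which simplifies to $\Gamma(\beta)/\Gamma(\beta-\alpha)$ via the recurrence $\Gamma(\beta-\alpha+1)=(\beta-\alpha)\Gamma(\beta-\alpha)$, yielding the second identity; for general $\alpha>0$ one replaces the single $\delta$ by $\delta^{n}$ with $n=\lceil\alpha\rceil$ and iterates the same step. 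I expect no serious obstacle beyond the bookkeeping of the change of variable in the first part: the only delicate point is to check that the substitution is admissible and the resulting Beta integral finite, which the standing hypotheses $\alpha,\beta>0$ guarantee.
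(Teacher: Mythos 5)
Your proof is correct. Note, however, that the paper itself offers no proof of this lemma: it is quoted verbatim from Kilbas--Srivastava--Trujillo \cite{kst} (where it appears as Property 2.24 and is established by exactly the computation you give), so there is no internal argument to compare against. Your route --- substituting $f(s)=(\log s)^{\beta-1}$ into the definition, changing variables $u=\log s/\log t$ to produce the Euler Beta integral $B(\beta,\alpha)$, and then obtaining the derivative formula by applying $\delta$ to ${_{H}{\mathscr{I}}_{1}^{1-\alpha}}$ together with the recurrence $\Gamma(\beta-\alpha+1)=(\beta-\alpha)\Gamma(\beta-\alpha)$ --- is the standard one and is complete. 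Two small remarks: first, the paper's displayed definition \eqref{hi} contains a typographical error (the kernel is written $(\log t)^{\alpha-1}$ rather than $(\log\frac{t}{s})^{\alpha-1}$); you correctly work with the genuine Hadamard kernel, which is also the one the paper uses everywhere else. Second, your extension to general $\alpha>0$ via $\delta^{n}{_{H}{\mathscr{I}}_{1}^{n-\alpha}}$ with $n=\lceil\alpha\rceil$ goes beyond what the paper's Definition 2 (stated only for $0\leq\alpha<1$) supports, but it is the correct reading of the lemma's hypothesis $\alpha>0$ and matches the general definition in \cite{kst}.
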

\noindent Following lemma plays vital role in the proof of the main results, the detailed proof can be found in \cite{pi}.
\begin{lem}\cite{pi} Suppose that $x>0.$ Then $\Gamma(x)=\lim_{m\to+\infty}\frac{m^{x}m!}{x(x+1)(x+2)\cdots(x+m)}.$
\end{lem}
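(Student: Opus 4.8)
The plan is to prove this classical Gauss product representation of $\Gamma$ by introducing the auxiliary functions
\[
\Gamma_m(x)=\int_0^m s^{x-1}\Big(1-\frac{s}{m}\Big)^m\,ds,\qquad m\in\N,
\]
and establishing two facts: that each $\Gamma_m(x)$ equals the right-hand side of the asserted identity, and that $\Gamma_m(x)\to\Gamma(x)$ as $m\to\infty$. The motivation is the elementary limit $\big(1-\tfrac{s}{m}\big)^m\to e^{-s}$, which makes $\Gamma_m(x)$ a natural truncated approximation of the integral defining $\Gamma(x)$.

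First I would evaluate $\Gamma_m(x)$ explicitly. Substituting $s=mt$ gives $\Gamma_m(x)=m^x\int_0^1 t^{x-1}(1-t)^m\,dt$. Writing $J(p,q)=\int_0^1 t^{p-1}(1-t)^q\,dt$ and integrating by parts (differentiating the factor $(1-t)^q$ and integrating $t^{p-1}$) produces the reduction $J(p,q)=\frac{q}{p}\,J(p+1,q-1)$, where the boundary terms vanish since $p>0$ and $q\ge 1$. Iterating this $m$ times lowers the second exponent to zero and yields
\[
J(x,m)=\frac{m!}{x(x+1)\cdots(x+m-1)}\,J(x+m,0)=\frac{m!}{x(x+1)\cdots(x+m)},
\]
because $J(x+m,0)=\int_0^1 t^{x+m-1}\,dt=\frac{1}{x+m}$. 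Multiplying by $m^x$ gives exactly $\Gamma_m(x)=\dfrac{m^x m!}{x(x+1)\cdots(x+m)}$, the expression appearing in the claimed formula.

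It then remains to pass to the limit, that is, to show $\lim_{m\to\infty}\Gamma_m(x)=\Gamma(x)$. I would extend each integrand to all of $(0,\infty)$ by setting it equal to zero for $s>m$, so that $\Gamma_m(x)=\int_0^\infty s^{x-1}\chi_{[0,m]}(s)\big(1-\tfrac{s}{m}\big)^m\,ds$. Using the elementary inequality $1-u\le e^{-u}$ for $0\le u\le 1$ one obtains $\big(1-\tfrac{s}{m}\big)^m\le e^{-s}$ on $[0,m]$, so the integrands are dominated by $s^{x-1}e^{-s}$, which is integrable on $(0,\infty)$ precisely because $x>0$. Since moreover $\big(1-\tfrac{s}{m}\big)^m$ increases to $e^{-s}$ for each fixed $s$, either the dominated or the monotone convergence theorem justifies interchanging limit and integral, giving $\Gamma_m(x)\to\int_0^\infty s^{x-1}e^{-s}\,ds=\Gamma(x)$. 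Combining this with the explicit evaluation of $\Gamma_m(x)$ finishes the proof.

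The main obstacle is the interchange of limit and integral in this last step: the integrals $\Gamma_m(x)$ are taken over the varying intervals $[0,m]$, so one must simultaneously control the pointwise convergence $\big(1-\tfrac{s}{m}\big)^m\to e^{-s}$, the singular behaviour near $s=0$ (where $s^{x-1}$ blows up when $x<1$), and the behaviour near infinity. The uniform domination by the integrable function $s^{x-1}e^{-s}$, valid for all $x>0$, is exactly what resolves these difficulties, so verifying the inequality $\big(1-\tfrac{s}{m}\big)^m\le e^{-s}$ together with the monotonicity of this sequence in $m$ is the technical heart of the argument.
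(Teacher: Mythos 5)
Your proof is correct and complete. The paper itself offers no argument for this lemma---it simply cites Podlubny's book for the details---so there is nothing internal to compare against; your derivation is the standard classical one (essentially the proof in the cited reference): evaluate $\Gamma_m(x)=\int_0^m s^{x-1}(1-s/m)^m\,ds$ exactly via the substitution $s=mt$ and the integration-by-parts reduction of $\int_0^1 t^{p-1}(1-t)^q\,dt$, then pass to the limit using the domination $(1-s/m)^m\le e^{-s}$ and monotone (or dominated) convergence. All steps check out, including the vanishing of the boundary terms (which needs exactly $p>0$ and $q\ge 1$) and the integrability of the dominating function $s^{x-1}e^{-s}$ for $x>0$.
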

We denote $D=[1,1+h], D_{h}=(1,1+h], I=(1,1+l],J=[1,1+l]$, $E=\{x:|x(\log{t})^{1-\gamma}-x_0|\leq b\}$ for $h>0,b>0$ and $t\in{D_h}.$ A function $x(t)$ is said to be a solution of IVP \eqref{s1} if there exist $l>0$ such that $x\in C^{0}(I)$ satisfies the equation  $_{H}{\mathscr{D}}_{1}^{\alpha,\beta}x(t)=f(t,x)$ almost everywhere on $I$ alongwith the condition $\lim_{t\to{1}}{(\log{t})}^{1-\gamma}x(t)=x_0.$ To construct the main results, the following hypotheses are considered:
\begin{description}
\item[(H1)]  $(t,x)\to f(t,(\log{t})^{\gamma-1}x(t))$ is defined on ${D}_{h}\times E$ satisfies:
\begin{itemize}
\item[(i)] $x\to f(t,(\log{t})^{\gamma-1}x(t))$ is continuous on $E$ for all $t\in{D_{h}}$,\\
 $t\to f(t,(\log{t})^{\gamma-1}x(t))$ is measurable on $D_{h}$ for all $x\in E;$\
\item[(ii)] there exist $k>(\beta(1-\alpha)-1)$ and $M\geq0$ such that the relation $|f(t,(\log{t})^{\gamma-1}x(t))|\leq M(\log{t})^{k}$ holds for all $t\in D_{h}$ and $x\in E,$
\end{itemize}
\item[(H2)] there exists $A>0$ such that $|f(t,(\log{t})^{\gamma-1}x_1(t))-f(t,(\log{t})^{\gamma-1}x_2(t))|$ $\leq A(\log{t})^{k}|x_1-x_2|,$ for all $t\in I$ and $x_1,x_2\in E.$
\end{description}
\begin{re}
In hypothesis \textbf{(H1)}, if $(\log{t})^{-k}f(t,(\log{t})^{\gamma-1}x(t))$ is continuous on $D\times E,$ one may choose $M=\max_{t\in J}(\log{t})^{-k}f(t,(\log{t})^{\gamma-1}x(t))$ continuous on ${D_h\times E}$ for all $x\in E.$
\end{re}

\section{Main results}
In this section, we state and prove the existence and uniqueness results for IVP \eqref{s1} concerned with above defined hypotheses. We present the iterative scheme for approximating such a unique solution.

For brevity let us choose $l=\min\bigg{\{h,{\big(\frac{b}{M}\frac{\Gamma(\alpha+k+1)}{\Gamma(k+1)}\big)}^{\frac{1}{\mu+k}}\bigg\}},\, \mu=1-\beta(1-\alpha).$
\begin{lem}
Suppose that \textbf{(H1)} holds. Then $x:J\to\R$ is a solution of IVP \eqref{s1} if and only if $x:I\to\R$ is a solution of the Volterra integral equation of second kind:
\begin{equation}\label{s2}
x(t)=x_0{\big(\log{t}\big)}^{\gamma-1}+\frac{1}{\Gamma(\alpha)}\int_{1}^{t}{\big(\log{\frac{t}{s}}\big)}^{\alpha-1}f(s,x(s))\frac{ds}{s},\quad t>1.
\end{equation}
\end{lem}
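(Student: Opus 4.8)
The plan is to establish the equivalence in two directions, exploiting the defining composition of the Hilfer--Hadamard derivative together with the semigroup and inversion properties of Hadamard fractional operators. Write $\gamma=\alpha+\beta(1-\alpha)$ and recall from \eqref{hh} that $_{H}{\mathscr{D}}_{1}^{\alpha,\beta}=\,_{H}{\mathscr{I}}_{1}^{\beta(1-\alpha)}\,_{H}{\mathscr{D}}_{1}^{\gamma}$, where $_{H}{\mathscr{D}}_{1}^{\gamma}=\delta\,_{H}{\mathscr{I}}_{1}^{1-\gamma}$ by Definition~2. The key ingredients will be: (a) the composition rule $_{H}{\mathscr{I}}_{1}^{\alpha}\,_{H}{\mathscr{I}}_{1}^{\beta}=\,_{H}{\mathscr{I}}_{1}^{\alpha+\beta}$; (b) the fact that $_{H}{\mathscr{D}}_{1}^{\alpha}$ is a left inverse of $_{H}{\mathscr{I}}_{1}^{\alpha}$ on the relevant weighted space; and (c) Lemma~1, which evaluates the action of the Hadamard operators on the power $(\log t)^{\gamma-1}$.

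For the forward direction, I would assume $x$ solves the differential problem and apply $_{H}{\mathscr{I}}_{1}^{\alpha}$ to both sides of $_{H}{\mathscr{D}}_{1}^{\alpha,\beta}x=f(t,x)$. Using \eqref{hh} and the semigroup property, the left-hand side becomes
\begin{equation*}
_{H}{\mathscr{I}}_{1}^{\alpha}\,_{H}{\mathscr{I}}_{1}^{\beta(1-\alpha)}\,_{H}{\mathscr{D}}_{1}^{\gamma}x=\,_{H}{\mathscr{I}}_{1}^{\gamma}\,_{H}{\mathscr{D}}_{1}^{\gamma}x,
\end{equation*}
since $\alpha+\beta(1-\alpha)=\gamma$. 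The standard Hadamard inversion formula then gives $_{H}{\mathscr{I}}_{1}^{\gamma}\,_{H}{\mathscr{D}}_{1}^{\gamma}x(t)=x(t)-\frac{(\,_{H}{\mathscr{I}}_{1}^{1-\gamma}x)(1^+)}{\Gamma(\gamma)}(\log t)^{\gamma-1}$, and the initial condition $\lim_{t\to1}(\log t)^{1-\gamma}x(t)=x_0$ identifies the boundary term with $x_0$. Combining yields \eqref{s2}. I would carry out the converse by applying $_{H}{\mathscr{D}}_{1}^{\alpha,\beta}$ to \eqref{s2}: the power term is annihilated because $_{H}{\mathscr{D}}_{1}^{\gamma}(\log t)^{\gamma-1}=0$ by Lemma~1, while the integral term reduces to $f(t,x)$ via $_{H}{\mathscr{I}}_{1}^{\beta(1-\alpha)}\,_{H}{\mathscr{D}}_{1}^{\gamma}\,_{H}{\mathscr{I}}_{1}^{\alpha}f=\,_{H}{\mathscr{I}}_{1}^{\beta(1-\alpha)}\,_{H}{\mathscr{D}}_{1}^{\gamma-\alpha+\alpha}\!\cdots$, which collapses to $f$ after the semigroup and inversion identities. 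Recovering the initial condition requires applying $_{H}{\mathscr{I}}_{1}^{1-\gamma}$ to \eqref{s2} and evaluating the limit as $t\to1^+$.

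The main obstacle, I expect, is justifying the inversion identity $_{H}{\mathscr{I}}_{1}^{\gamma}\,_{H}{\mathscr{D}}_{1}^{\gamma}x=x-\text{(boundary term)}$ and the interchange of operators in the correct weighted function space rather than formally. This demands that $_{H}{\mathscr{I}}_{1}^{1-\gamma}x$ be absolutely continuous (or lie in the appropriate domain) so that the fundamental-theorem step underlying $\delta\,_{H}{\mathscr{I}}_{1}^{1-\gamma}$ is valid, and that the integral in \eqref{s2} be well defined near $t=1$. Here hypothesis \textbf{(H1)(ii)}, namely $|f(t,(\log t)^{\gamma-1}x(t))|\le M(\log t)^{k}$ with $k>\beta(1-\alpha)-1$, is exactly what guarantees integrability of the singular kernel and continuity of the resulting integral, so I would use it to certify that every operator application above is legitimate and that the limit extracting $x_0$ exists. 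The remaining steps are routine manipulations of the Hadamard semigroup laws and Lemma~1.
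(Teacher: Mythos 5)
Your proposal is correct in outline, but it follows a genuinely different route from the paper's own argument. The paper does not perform any operator inversion at all: in the forward direction it merely checks, via \textbf{(H1)(ii)}, that the integral term $\frac{1}{\Gamma(\alpha)}\int_{1}^{t}(\log\frac{t}{s})^{\alpha-1}f(s,x(s))\frac{ds}{s}$ is well defined, is bounded by $M(\log t)^{\alpha+k}\Gamma(k+1)/\Gamma(\alpha+k+1)$, and has vanishing weighted limit as $t\to1$ (which is exactly where $k>\beta(1-\alpha)-1$ enters), and then asserts ``it follows that'' the integral equation holds; the converse is dismissed as easy. In effect the paper leans on the equivalence already established in \cite{kt} for the analogous problem \eqref{h1}--\eqref{a9}. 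You instead carry out the inversion explicitly: applying $_{H}{\mathscr{I}}_{1}^{\alpha}$, using the semigroup law to reach $_{H}{\mathscr{I}}_{1}^{\gamma}\,{_{H}{\mathscr{D}}_{1}^{\gamma}}x$, invoking the Hadamard Taylor-type formula to produce the boundary term, and reversing the process for the converse. Your version is the more self-contained and rigorous of the two; what it costs you is the need to verify the domain hypotheses for the inversion identity (absolute continuity of $_{H}{\mathscr{I}}_{1}^{1-\gamma}x$ in the $\delta$-sense) and one auxiliary fact you use implicitly: that the initial condition $\lim_{t\to1}(\log t)^{1-\gamma}x(t)=x_0$ forces $({_{H}{\mathscr{I}}_{1}^{1-\gamma}}x)(1^{+})=\Gamma(\gamma)x_0$, so that the boundary term $\frac{({_{H}{\mathscr{I}}_{1}^{1-\gamma}}x)(1^{+})}{\Gamma(\gamma)}(\log t)^{\gamma-1}$ really equals $x_0(\log t)^{\gamma-1}$. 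That identification follows from Lemma~1 applied to $(\log s)^{\gamma-1}$ together with the estimate on the remainder, but it is a step you should state rather than absorb into ``identifies the boundary term with $x_0$.'' With that made explicit, your argument is complete and in fact repairs the main logical jump in the paper's proof.
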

\begin{proof} First we suppose that $x:I\to\R$ is a solution of IVP \eqref{s1}. Then $|{\big(\log{t}\big)}^{1-\gamma}x(t)-x_0|\leq b$ for all $t\in I.$ From \textbf{(H1)}, there exists a $k>(\beta(1-\alpha)-1)$ and $M\geq0$ such that
\begin{equation*}
  |f(t,x(t))|=|f(t,{(\log{t})}^{\gamma-1}{(\log{t})}^{1-\gamma}x(t))|\leq M{(\log{t})}^{k},\qquad \text{for all}\quad t\in I.
\end{equation*}
We have
\begin{align*}
\bigg{|}\frac{1}{{\Gamma(\alpha)}}\int_{1}^{t}{{\big(\log{\frac{t}{s}}\big)}^{\alpha-1}}f(s,x(s))\frac{ds}{s}\bigg{|}&\leq \frac{1}{{\Gamma(\alpha)}}\int_{1}^{t}{{\big(\log{\frac{t}{s}}\big)}^{\alpha-1}}M{{(\log{s})}^{k}}\frac{ds}{s}\\
&=M{(\log{t})}^{\alpha+k}\frac{\Gamma(k+1)}{\Gamma(\alpha+k+1)}.
\end{align*}
Clearly,
\begin{equation*}
\lim_{t\to1}{\big(\log{t}\big)}^{1-\gamma}\frac{1}{{\Gamma(\alpha)}}\int_{1}^{t}{{\big(\log{\frac{t}{s}}\big)}^{\alpha-1}}f(s,x(s))\frac{ds}{s}=0.
\end{equation*}
It follows that
\begin{equation*}
x(t)=x_0{\big(\log{t}\big)}^{\gamma-1}+\frac{1}{{\Gamma(\alpha)}}\int_{1}^{t}{{\big(\log{\frac{t}{s}}\big)}^{\alpha-1}}f(s,x(s))\frac{ds}{s},\quad t\in I.
\end{equation*}
Since $k>(\beta(1-\alpha)-1),$ then $x\in{C^{0}(I)}$ is a solution of integral equation \eqref{s2}.

Conversely, it is easy to see that $x:I\to\R$ is a solution of integral equation \eqref{s2} implies that $x$ is a solution of IVP \eqref{s1} defined on $J.$ This completes the proof.
\end{proof}

To prove further main results, we choose a Picard function sequence as follows:
\begin{equation}\label{pfc}\begin{split}
  \phi_0(t)&=x_0{(\log{t})}^{\gamma-1},\qquad t\in I, \\
  \phi_n(t)=\phi_0(t)+&\frac{1}{\Gamma(\alpha)}\int_{1}^{t}{\big(\log{\frac{t}{s}}\big)}^{\alpha-1}f(s,\phi_{n-1}(s))\frac{ds}{s},\quad t\in I,\quad n=1,2,\cdots.
\end{split}\end{equation}
\begin{lem}
Suppose that \textbf{(H1)} holds. Then $\phi_n$ is continuous on $I$ and satisfies $|{(\log{t})}^{1-\gamma}\phi_n(t)-x_0|\leq b.$
\end{lem}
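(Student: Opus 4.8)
The plan is to prove both assertions by induction on $n$, since the Picard iterates in \eqref{pfc} are defined recursively and each statement about $\phi_n$ depends naturally on the previous iterate $\phi_{n-1}$. The base case $n=0$ is immediate: $\phi_0(t)=x_0(\log t)^{\gamma-1}$ is continuous on $I$, and the weighted deviation $(\log t)^{1-\gamma}\phi_0(t)-x_0=x_0-x_0=0$, so the bound $b$ holds trivially.

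For the inductive step I would assume $\phi_{n-1}$ is continuous on $I$ and satisfies $|(\log t)^{1-\gamma}\phi_{n-1}(t)-x_0|\le b$, i.e.\ $\phi_{n-1}$ takes values keeping the pair $(t,\phi_{n-1}(t))$ inside the admissible set $E$. This is exactly what lets me invoke hypothesis \textbf{(H1)(ii)}, giving the pointwise bound $|f(s,\phi_{n-1}(s))|\le M(\log s)^{k}$ on $I$. First I would establish continuity of $\phi_n$: it is the sum of the continuous function $\phi_0$ and the Hadamard-type fractional integral of $f(s,\phi_{n-1}(s))$; continuity of this integral term on $I$ follows from the growth bound (which makes the integrand integrable since $k>\beta(1-\alpha)-1$ guarantees the relevant exponent exceeds $-1$) together with the continuity in $t$ of the kernel, via a standard dominated-convergence or uniform-continuity argument.

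The heart of the proof is verifying the bound $|(\log t)^{1-\gamma}\phi_n(t)-x_0|\le b$. Subtracting $x_0$ from $(\log t)^{1-\gamma}\phi_n(t)$ kills the $\phi_0$ contribution (just as in the base case), leaving
\begin{equation*}
\big|(\log t)^{1-\gamma}\phi_n(t)-x_0\big|=\frac{(\log t)^{1-\gamma}}{\Gamma(\alpha)}\left|\int_{1}^{t}\Big(\log\tfrac{t}{s}\Big)^{\alpha-1}f(s,\phi_{n-1}(s))\frac{ds}{s}\right|.
\end{equation*}
I would then insert the growth estimate $|f(s,\phi_{n-1}(s))|\le M(\log s)^k$ and evaluate the resulting Beta-type integral using the first identity in Lemma~1 (the formula for ${_{H}{\mathscr{I}}_{1}^{\alpha}}(\log s)^{k}$), which yields $M(\log t)^{\alpha+k}\Gamma(k+1)/\Gamma(\alpha+k+1)$ for the integral, exactly mirroring the computation already carried out in the proof of Lemma~3. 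Multiplying by $(\log t)^{1-\gamma}$ and recalling $\gamma=\alpha+\beta(1-\alpha)$ so that $1-\gamma+\alpha=1-\beta(1-\alpha)=\mu$, the bound becomes $M(\log t)^{\mu+k}\Gamma(k+1)/\Gamma(\alpha+k+1)$.

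The final step is to observe that this quantity is monotone increasing in $t$ on $I$ (since $\mu+k>0$), so it attains its maximum at $t=1+l$, where $(\log(1+l))^{\mu+k}\le l^{\mu+k}$; substituting the definition $l=\min\{h,(\frac{b}{M}\frac{\Gamma(\alpha+k+1)}{\Gamma(k+1)})^{1/(\mu+k)}\}$ forces the whole expression to be $\le b$, closing the induction. The main obstacle I anticipate is the continuity argument for the fractional-integral term rather than the bound itself: one must handle the weak singularity of the kernel $(\log\frac{t}{s})^{\alpha-1}$ at $s=t$ carefully and confirm that the singular integrand is genuinely integrable under the condition $k>\beta(1-\alpha)-1$, whereas the norm estimate reduces cleanly to the explicit Lemma~1 computation and the choice of $l$.
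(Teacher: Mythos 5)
Your proposal is correct and follows essentially the same route as the paper: induction on $n$, with the trivial base case $\phi_0$, the bound $|f(s,\phi_{n-1}(s))|\le M(\log s)^k$ from \textbf{(H1)} applied via the inductive hypothesis, the Lemma~1 evaluation of the Beta-type integral giving $M(\log t)^{\alpha+k}\Gamma(k+1)/\Gamma(\alpha+k+1)$, and the choice of $l$ (noting $\alpha+1-\gamma=\mu$) to force the weighted deviation below $b$. Your treatment is if anything slightly more careful than the paper's on the continuity of the integral term, which the paper simply asserts.
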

\begin{proof} From \textbf{(H1)}, clearly $|f(t,{(\log{t})}^{\gamma-1}x)|\leq M{(\log{t})}^{k}$ for all $t\in{D_h}$ and $|x{(\log{t})}^{1-\gamma}-x_0|\leq b.$ For $n=1,$ we have
\begin{equation}\label{l1}
\phi_1(t)=x_0{(\log{t})}^{\gamma-1}+\frac{1}{\Gamma(\alpha)}\int_{1}^{t}{{\big(\log{\frac{t}{s}}\big)}^{\alpha-1}}f(s,\phi_{0}(s))\frac{ds}{s}.
\end{equation}
Then
\begin{equation*}
\bigg{|}\frac{1}{\Gamma(\alpha)}\int_{1}^{t}{{\big(\log{\frac{t}{s}}\big)}^{\alpha-1}}f(s,\phi_0(s))\frac{ds}{s}\bigg{|}\leq \frac{1}{\Gamma(\alpha)}\int_{1}^{t}{{\big(\log{\frac{t}{s}}\big)}^{\alpha-1}}M{(\log{s}\big)}^{k}\frac{ds}{s}=M{(\log{t}\big)}^{\alpha+k}\frac{\Gamma(k+1)}{\Gamma(\alpha+k+1)}.
\end{equation*}
This implies $\phi_1\in{C^{0}(I)}$ and from equation \eqref{l1}, we get
\begin{equation}\label{l2}
  |{(\log{t})}^{1-\gamma}\phi_1(t)-x_0|\leq{(\log{t})}^{1-\gamma}M{(\log{t})}^{\alpha+k}\frac{\Gamma(k+1)}{\Gamma(\alpha+k+1)}\leq Ml^{\alpha+k+1-\gamma}\frac{\Gamma(k+1)}{\Gamma(\alpha+k+1)}.
\end{equation}
Now by induction hypothesis, suppose that $\phi_n\in{C^{0}(J)}$ and $|{(\log{t})}^{1-\gamma}\phi_n(t)-x_0|\leq b$ for all $t\in J.$ We have
\begin{equation}\label{l3}
\phi_{n+1}(t)=x_0{(\log{t})}^{\gamma-1}+\frac{1}{\Gamma(\alpha)}\int_{1}^{t}{{\big(\log{\frac{t}{s}}\big)}^{\alpha-1}}f(s,\phi_{n}(s))\frac{ds}{s}.
\end{equation}
From above discussion, we obtain $\phi_{n+1}(t)\in {C^{0}(I)}$ and from equation \eqref{l3}, we have
\begin{align*}
   |{(\log{t}\big)}^{1-\gamma}\phi_{n+1}(t)-x_0|&\leq {(\log{t})}^{1-\gamma}\frac{1}{\Gamma(\alpha)}\int_{1}^{t}{{\big(\log{\frac{t}{s}}\big)}^{\alpha-1}}M{(\log{s}\big)}^{k}\frac{ds}{s}\\
  &=M{(\log{t})}^{\alpha+k+1-\gamma}\frac{\Gamma(k+1)}{\Gamma(\alpha+k+1)} \\
  &\leq Ml^{\alpha+k+1-\gamma}\frac{\Gamma(k+1)}{\Gamma(\alpha+k+1)}\leq b.
\end{align*}
Thus, the result is true for $n+1$ and holds the induction hypotheses. Therefore, by the mathematical induction principle, the result is true for all $n.$ The proof is complete.
\end{proof}

\begin{thm}
Suppose \textbf{(H1)}-\textbf{(H2)} holds. Then $\{{(\log{t})}^{1-\gamma}\phi_n(t)\}$ is uniformly convergent sequence on $J.$
\end{thm}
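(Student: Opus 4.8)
The plan is to recognise the sequence $\{(\log t)^{1-\gamma}\phi_n(t)\}$, which I abbreviate $\psi_n(t):=(\log t)^{1-\gamma}\phi_n(t)$, as the partial sums of a telescoping series and then apply the Weierstrass $M$-test on $J$. Since $\psi_0(t)=x_0$ is constant, I would write
\[
\psi_n(t)=x_0+\sum_{j=1}^{n}\big(\psi_j(t)-\psi_{j-1}(t)\big),
\]
so that uniform convergence of $\{\psi_n\}$ on $J$ follows once I show $\sum_{j\geq 1}\sup_{t\in J}|\psi_j(t)-\psi_{j-1}(t)|<\infty$. Lemma 4 already guarantees that each $\psi_n$ is continuous and bounded on $J$, so the uniform limit will automatically be continuous.

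Next I would derive a uniform bound for the successive differences by induction on $n$. Subtracting two consecutive terms of \eqref{pfc} and inserting the weight $(\log t)^{1-\gamma}$ gives
\[
|\psi_{n+1}(t)-\psi_n(t)|\leq\frac{(\log t)^{1-\gamma}}{\Gamma(\alpha)}\int_{1}^{t}\big(\log\tfrac{t}{s}\big)^{\alpha-1}\big|f(s,\phi_n(s))-f(s,\phi_{n-1}(s))\big|\frac{ds}{s}.
\]
Hypothesis \textbf{(H2)} replaces the integrand factor by $A(\log s)^{k}|\psi_n(s)-\psi_{n-1}(s)|$. Feeding in the inductive hypothesis $|\psi_n(s)-\psi_{n-1}(s)|\leq C_n(\log s)^{p_n}$ and evaluating the resulting Hadamard integral of $(\log s)^{k+p_n}$ by formula \eqref{pr} of Lemma 1 (a ratio of Gamma functions), I reproduce the same form at level $n+1$ with the recursions $p_{n+1}=p_n+(\alpha+k+1-\gamma)$ and $C_{n+1}=A\,C_n\,\Gamma(k+p_n+1)/\Gamma(\alpha+k+p_n+1)$. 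The base case $p_1=\alpha+k+1-\gamma$, $C_1=M\Gamma(k+1)/\Gamma(\alpha+k+1)$ is exactly the estimate already produced in the proof of Lemma 4, and the linear recursion solves to $p_n=n(\alpha+k+1-\gamma)$.

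Writing $L:=\log(1+l)$, I then have $\sup_{t\in J}|\psi_n(t)-\psi_{n-1}(t)|\leq C_n L^{p_n}=:a_n$, and it remains to prove $\sum a_n<\infty$, which I would do by the ratio test:
\[
\frac{a_{n+1}}{a_n}=A\,L^{\,\alpha+k+1-\gamma}\,\frac{\Gamma(k+p_n+1)}{\Gamma(\alpha+k+p_n+1)}.
\]
Because \textbf{(H1)}(ii) forces $\alpha+k+1-\gamma=\mu+k>0$, we have $p_n\to\infty$, so the Gamma quotient behaves like $p_n^{-\alpha}$ and tends to $0$ (this is where the Gamma asymptotics underlying Lemma 2 enter). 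Hence $a_{n+1}/a_n\to0<1$, the series converges, and the $M$-test yields uniform convergence of $\sum(\psi_j-\psi_{j-1})$, and therefore of $\{\psi_n\}$, on $J$.

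The main obstacle is the second step. Because the Lipschitz bound in \textbf{(H2)} carries the weight $(\log s)^{k}$, powers of $\log$ accumulate and the exponent $p_n$ grows \emph{linearly} in $n$; one therefore cannot replace $(\log s)^k$ by a constant (indeed it is singular at $s=1$ when $-1<k<0$) and must keep it inside the integral and evaluate it exactly through \eqref{pr}. Tracking the induced product of Gamma quotients and confirming, via their asymptotic decay, that it overwhelms the geometric factor $A^nL^{p_n}$ is the delicate part; everything else is routine bookkeeping.
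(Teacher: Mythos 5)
Your proposal is correct and follows essentially the same route as the paper: the telescoping decomposition of $(\log t)^{1-\gamma}\phi_n(t)$, an induction on the weighted differences using \textbf{(H2)} together with the Hadamard integral formula of Lemma 1, and a ratio test in which the quotient $\Gamma(k+p_n+1)/\Gamma(\alpha+k+p_n+1)\sim p_n^{-\alpha}\to 0$ (the content of Lemma 2 in the paper) kills the geometric factor. Your recursive bookkeeping via $C_n$ and $p_n$ is just a cleaner packaging of the paper's explicit product of Gamma quotients, and your use of $L=\log(1+l)$ in place of the paper's $l$ is a small tidying of the same estimate.
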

\begin{proof} Consider the series
\begin{equation*}
{{(\log{t})}^{1-\gamma}\phi_0(t)}+{{(\log{t})}^{1-\gamma}[\phi_1(t)-\phi_0(t)]}+\cdots+{{(\log{t})}^{1-\gamma}[\phi_n(t)-\phi_{n-1}(t)]}+\cdots,\quad t\in J.
\end{equation*}
By relation \eqref{l2} driven in the proof of Lemma 4 above,
\begin{equation*}
{(\log{t})}^{1-\gamma}|\phi_1(t)-\phi_0(t)|\leq M{(\log{t})}^{\alpha+k+1-\gamma}\frac{\Gamma(k+1)}{\Gamma(\alpha+k+1)}, \qquad t\in J.
\end{equation*}
From Lemma 4,
\begin{align*}
{(\log{t})}^{1-\gamma}|\phi_2(t)&-\phi_1(t)|\leq{(\log{t})}^{1-\gamma}\frac{1}{\Gamma(\alpha)}\int_{1}^{t}{{\big(\log{\frac{t}{s}}\big)}^{\alpha-1}}|f(s,\phi_1(s))-f(s,\phi_0(s))|\frac{ds}{s}\\
=&{(\log{t}\big)}^{1-\gamma}\frac{1}{\Gamma(\alpha)}\int_{1}^{t}{{\big(\log{\frac{t}{s}}\big)}^{\alpha-1}}\big|f\big(s,{(\log{s})}^{\gamma-1}{(\log{s})}^{1-\gamma}\phi_1(s)\big)\\
&\hspace{3cm}-f\big(s,{(\log{s})}^{\gamma-1}{(\log{s})}^{1-\gamma}\phi_0(s)\big)\big|\frac{ds}{s}\\
\leq&{(\log{t})}^{1-\gamma}\frac{1}{\Gamma(\alpha)}\int_{1}^{t}{{\big(\log{\frac{t}{s}}\big)}^{\alpha-1}}A{(\log{s})}^{k}\big|{(\log{s})}^{1-\gamma}\phi_1(s)-{(\log{s})}^{1-\gamma}\phi_0(s)\big|\frac{ds}{s}\\
\leq&{(\log{t})}^{1-\gamma}\frac{1}{\Gamma(\alpha)}\int_{1}^{t}{{\big(\log{\frac{t}{s}}\big)}^{\alpha-1}}A{(\log{s})}^{k}\big[{(\log{s})}^{1-\gamma}|\phi_1(s)-\phi_0(s)|\big]\frac{ds}{s}\\
\leq&{(\log{t})}^{1-\gamma}\frac{1}{\Gamma(\alpha)}\int_{1}^{t}{{\big(\log{\frac{t}{s}}\big)}^{\alpha-1}}A{(\log{s})}^{k}\big[M{(\log{s})}^{\alpha+k+1-\gamma}\frac{\Gamma(k+1)}{\Gamma(\alpha+k+1)}\big]\frac{ds}{s}\\
=&AM\frac{\Gamma(k+1)}{\Gamma(\alpha+k+1)}\frac{\Gamma(\alpha+2k+2-\gamma)}{\Gamma(2\alpha+2k+2-\gamma)}{(\log{t})}^{2(\alpha+k+1-\gamma)}.
\end{align*}
Now suppose that
\begin{equation*}
 {(\log{t})}^{1-\gamma}|\phi_{n+1}(t)-\phi_n(t)|\leq A^{n}M{(\log{t})}^{(n+1)(\alpha+k+1-\gamma)}\prod_{i=0}^{n}\frac{\Gamma((i+1)k+i(\alpha+1-\gamma)+1)}{\Gamma((i+1)(\alpha+k)+i(1-\gamma)+1)}.
\end{equation*}
We have
\begin{align*}
{(\log{t})}^{1-\gamma}|\phi_{n+2}(t)-\phi_{n+1}(t)|&\leq{(\log{t})}^{1-\gamma}\frac{1}{\Gamma(\alpha)}\int_{1}^{t}{{\big(\log{\frac{t}{s}}\big)}^{\alpha-1}}|f(s,\phi_{n+1}(s))-f(s,\phi_n(s))|\frac{ds}{s}\\
&={(\log{t})}^{1-\gamma}\frac{1}{\Gamma(\alpha)}\int_{1}^{t}{{\big(\log{\frac{t}{s}}\big)}^{\alpha-1}}\big|f\big(s,{(\log{s})}^{\gamma-1}{(\log{s})}^{1-\gamma}\phi_{n+1}(s)\big)\\
&\hspace{3cm}-f\big(s,{(\log{s})}^{\gamma-1}{(\log{s})}^{1-\gamma}\phi_n(s)\big)\big|\frac{ds}{s}\\
{(\log{t})}^{1-\gamma}|\phi_{n+2}(t)-\phi_{n+1}(t)|&\leq{(\log{t})}^{1-\gamma}\frac{1}{(\Gamma(\alpha))}\int_{1}^{t}{{\big(\log{\frac{t}{s}}\big)}^{\alpha-1}}A{(\log{s})}^{k}\big[{(\log{s})}^{1-\gamma}|\phi_{n+1}(s)-\phi_n(s)|\big]\frac{ds}{s}\\
&\leq{(\log{t})}^{1-\gamma}\frac{1}{\Gamma(\alpha)}\int_{1}^{t}{{\big(\log{\frac{t}{s}}\big)}^{\alpha-1}}A{(\log{s})}^{k}\big[A^{n}M{(\log{s})}^{(n+1)(\alpha+k+1-\gamma)}\\
&\hspace{2.5cm}\times\prod_{i=0}^{n}\frac{\Gamma((i+1)k+i(\alpha+1-\gamma)+1)}{\Gamma((i+1)(\alpha+k)+i(1-\gamma)-1)}\big]\frac{ds}{s}\\
&={A^{n+1}M{(\log{t})}^{(n+2)(\alpha+k+1-\gamma)}}\prod_{i=0}^{n+1}\frac{\Gamma((i+1)k+i(\alpha+1-\gamma)+1)}{\Gamma((i+1)(\alpha+k)+i(1-\gamma)+1)}
\end{align*}
which follows that the result is true for $n+1.$ Using principle of mathematical induction, we get
\begin{equation}\label{l4}
{(\log{t})}^{1-\gamma}|\phi_{n+2}(t)-\phi_{n+1}(t)|\leq A^{n+1}Ml^{(n+2)(\alpha+k+1-\gamma)}\prod_{i=0}^{n+1}\frac{\Gamma((i+1)k+i(\alpha+1-\gamma)+1)}{\Gamma((i+1)(\alpha+k)+i(1-\gamma)+1)}.
\end{equation}
Consider
\begin{equation*}
\sum_{n=1}^{\infty}u_n=\sum_{n=1}^{\infty}MA^{n+1}l^{(n+2)(\alpha+k+1-\gamma)}\prod_{i=0}^{n+1}\frac{\Gamma((i+1)k+i(\alpha+1-\gamma)+1)}{\Gamma((i+1)(\alpha+k)+i(1-\gamma)+1)}.
\end{equation*}
We have
\begin{align*}
\frac{u_{n+1}}{u_n}&=\frac{MA^{n+2}l^{(n+3)(\alpha+k+1-\gamma)}\prod_{i=0}^{n+2}\frac{\Gamma((i+1)k+i(\alpha+1-\gamma)+1)}{\Gamma((i+1)(\alpha+k)+i(1-\gamma)+1)}}{MA^{n+1}l^{(n+2)(\alpha+k+1-\gamma)}\prod_{i=0}^{n+1}\frac{\Gamma((i+1)k+i(\alpha+1-\gamma)+1)}{\Gamma((i+1)(\alpha+k)+i(1-\gamma)+1)}}\\
&=Al^{\alpha+k+1-\gamma}\frac{\Gamma((n+3)k+(n+2)(\alpha+1-\gamma)+1)}{\Gamma((n+3)(k+\alpha)+(n+2)(1-\gamma)+1)}.
\end{align*}
Using Lemma 2, we have
\begin{align*}
\frac{u_{n+1}}{u_n}&=Al^{\alpha+k+1-\gamma}\frac{\lim_{m\to\infty}\frac{m^{(n+3)k+(n+2)(\alpha+1-\gamma)+1}m!}{((n+3)k+(n+2)(\alpha+1-\gamma)+1)\cdots((n+3)k+(n+2)(\alpha+1-\gamma)+m+1)}}{\lim_{m\to\infty}\frac{m^{(n+3)(k+\alpha)+(n+2)(1-\gamma)+1}m!}{((n+3)(k+\alpha)+(n+2)(1-\gamma)+1)\cdots((n+3)(k+\alpha)+(n+2)(1-\gamma)+m+1)}}
\end{align*}
$\hspace{2.5cm}=Al^{\alpha+k+1-\gamma}[\lim_{m\to\infty}m^{-\alpha}\frac{((n+3)(k+\alpha)+(n+2)(1-\gamma)+1)\cdots((n+3)(k+\alpha)+(n+2)(1-\gamma)+m+1)}
{((n+3)k+(n+2)(\alpha+1-\gamma)+1)\cdots((n+3)k+(n+2)(\alpha+1-\gamma)+m+1)}].$\\ \\
It is easy to see that $$\frac{((n+3)(k+\alpha)+(n+2)(1-\gamma)+1)\cdots((n+3)(k+\alpha)+(n+2)(1-\gamma)+m+1)}
{((n+3)k+(n+2)(\alpha+1-\gamma)+1)\cdots((n+3)k+(n+2)(\alpha+1-\gamma)+m+1)}$$ is bounded for all $m,n.$ Thus $\lim_{n\to\infty}\frac{u_{n+1}}{u_n}=0.$ This implies $\sum_{n=1}^{\infty}u_n$ is convergent. Hence the series
\begin{equation*}
{{(\log{t}\big)}^{1-\gamma}\phi_0(t)}+{{(\log{t})}^{1-\gamma}[\phi_1(t)-\phi_0(t)]}+\cdots+{{(\log{t})}^{1-\gamma}[\phi_n(t)-\phi_{n-1}(t)]}+\cdots
\end{equation*}
is uniformly convergent for $t\in J.$ Therefore $\{{(\log{t})}^{1-\gamma}\phi_n(t)\}$ is uniformly convergent sequence on $J.$
\end{proof}

\begin{thm}
Suppose that \textbf{(H1)}-\textbf{(H2)} holds. Then $\phi(t)={(\log{t})}^{\gamma-1}\lim_{n\to\infty}{(\log{t})}^{1-\gamma}\phi_n(t)$ is a unique continuous solution of integral equation \eqref{s2} defined on $J.$
\end{thm}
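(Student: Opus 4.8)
The plan is to separate the statement into three tasks: that $\phi$ is well defined and continuous in the weighted sense, that it solves \eqref{s2}, and that it is the only such solution. The first task is essentially free from Theorem 3. Since $\{(\log t)^{1-\gamma}\phi_n(t)\}$ converges uniformly on $J$, its limit---call it $\psi(t)$---is continuous on $J$, so that $\phi(t)=(\log t)^{\gamma-1}\psi(t)$ is well defined on $I$ and continuous there, with $(\log t)^{1-\gamma}\phi(t)=\psi(t)\in C^{0}(J)$. Passing the bound $|(\log t)^{1-\gamma}\phi_n(t)-x_0|\le b$ of Lemma 4 to the limit yields $|\psi(t)-x_0|\le b$, so $\phi$ remains in the admissible set $E$; and since each $\phi_n$ satisfies $(\log t)^{1-\gamma}\phi_n(t)\to x_0$ as $t\to1$, the uniform limit inherits $\psi(1)=x_0$, which gives the initial condition in \eqref{s1}.

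For existence I would pass to the limit in the Picard recursion \eqref{pfc}. The only nontrivial point is interchanging limit and integral in the term $\frac{1}{\Gamma(\alpha)}\int_{1}^{t}(\log\frac{t}{s})^{\alpha-1}f(s,\phi_{n-1}(s))\frac{ds}{s}$. Writing $f(s,\phi_{n-1}(s))=f(s,(\log s)^{\gamma-1}(\log s)^{1-\gamma}\phi_{n-1}(s))$ and applying \textbf{(H2)}, the difference between the $(n-1)$-th integrand and the limiting integrand is bounded by $A(\log s)^{k}\,|(\log s)^{1-\gamma}\phi_{n-1}(s)-\psi(s)|$. Inserting this bound and evaluating the resulting weakly singular integral by formula \eqref{pr} of Lemma 1, the error is controlled by
\[
A\,\bigl\|(\log s)^{1-\gamma}\phi_{n-1}-\psi\bigr\|_{\infty}\,(\log t)^{\alpha+k}\,\frac{\Gamma(k+1)}{\Gamma(\alpha+k+1)},
\]
which tends to $0$ uniformly on $J$ by Theorem 3. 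Letting $n\to\infty$ in \eqref{pfc} then shows that $\phi$ satisfies \eqref{s2}.

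For uniqueness, suppose $\phi$ and $\tilde\phi$ both solve \eqref{s2} and set $w(t)=(\log t)^{1-\gamma}|\phi(t)-\tilde\phi(t)|$, which is bounded on $J$, say by $W$. Subtracting the two integral equations, multiplying through by $(\log t)^{1-\gamma}$, and applying \textbf{(H2)} produces the weakly singular Gronwall inequality
\[
w(t)\le \frac{A}{\Gamma(\alpha)}\,(\log t)^{1-\gamma}\int_{1}^{t}\Bigl(\log\tfrac{t}{s}\Bigr)^{\alpha-1}(\log s)^{k}\,w(s)\,\frac{ds}{s}.
\]
I would iterate this inequality, each step (via \eqref{pr}) contributing one further factor $A(\log t)^{\alpha+k+1-\gamma}$ together with one more Gamma ratio, thereby reproducing after $n$ iterations the product bound
\[
w(t)\le A^{n}W\,(\log t)^{n(\alpha+k+1-\gamma)}\prod_{i=0}^{n-1}\frac{\Gamma((i+1)k+i(\alpha+1-\gamma)+1)}{\Gamma((i+1)(\alpha+k)+i(1-\gamma)+1)}
\]
already analysed in Theorem 3. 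By the ratio-test estimate established there, the right-hand side tends to $0$ as $n\to\infty$, forcing $w\equiv0$ and hence $\phi=\tilde\phi$ on $I$.

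The step I expect to be the genuine obstacle is the interchange of limit and integral in the existence part: the kernel $(\log\frac{t}{s})^{\alpha-1}$ is singular at $s=t$ and, when $k<0$, the integrand is additionally singular at $s=1$, so uniform convergence of $(\log s)^{1-\gamma}\phi_n$ by itself does not justify passing the limit inside. It is precisely the combination of the Lipschitz hypothesis \textbf{(H2)} with the explicit evaluation \eqref{pr} that converts uniform convergence into a uniformly vanishing error bound; the very same estimate is what makes the Gronwall iteration in the uniqueness argument terminate.
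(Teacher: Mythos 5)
Your proposal is correct, and its existence half is essentially the paper's argument: both pass to the limit in the Picard recursion \eqref{pfc} by using \textbf{(H2)} to bound the integrand difference by $A(\log s)^{k}$ times the uniform error of $(\log s)^{1-\gamma}\phi_{n-1}$, then evaluate the weakly singular integral via \eqref{pr}; your version is in fact slightly more careful than the paper's, which asserts the uniform convergence of $(\log s)^{-k}[f(s,\phi_{n-1}(s))-f(s,\phi(s))]$ somewhat loosely. Where you genuinely diverge is in the uniqueness half. The paper does not compare two solutions directly: it takes a putative second solution $\psi$ of \eqref{s2} and proves by induction that $(\log t)^{1-\gamma}|\phi_n(t)-\psi(t)|$ obeys the same product-of-Gamma-ratios bound as in Theorem 3, so that the Picard iterates converge uniformly to $\psi$ as well, forcing $\psi=\phi$ by uniqueness of limits. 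You instead set $w(t)=(\log t)^{1-\gamma}|\phi(t)-\tilde\phi(t)|$ and iterate the weakly singular Gronwall inequality, arriving at the identical product bound with the a priori bound $W$ on $w$ in place of $M$. The two arguments have the same computational core (the same induction, the same Gamma ratios, the same ratio-test limit $0$), but yours is self-contained in the sense that it never re-invokes the Picard sequence, while the paper's reuses the machinery of Theorem 3 verbatim; both require the standing assumption that any competing solution stays in $E$ so that \textbf{(H2)} applies and $w$ is bounded, a point you state explicitly and the paper assumes via $(\log t)^{1-\gamma}|\psi(t)|\leq b$. Your closing observation about the limit--integral interchange correctly identifies the one step where uniform convergence alone would not suffice and the explicit evaluation \eqref{pr} is doing the real work.
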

\begin{proof} Since $\phi(t)={(\log{t})}^{\gamma-1}\lim_{n\to\infty}{(\log{t})}^{1-\gamma}\phi_n(t)$ on $J,$ and by Lemma 2, ${(\log{t})}^{1-\gamma}|\phi(t)-x_0|\leq b.$ Then
\begin{align*}
  |f(t,\phi_{n}(t))-f(t,\phi(t))|\leq A{(\log{t})}^{k}&|\phi_{n}(t)-\phi(t)|,\quad t\in I,\\
  {(\log{t})}^{-k}|f(t,\phi_{n}(t))-f(t,\phi(t))|&\leq A|\phi_{n}(t)-\phi(t)|\to0
\end{align*}
uniformly as $n\to\infty$ on $I.$ Therefore
\begin{align*}
{(\log{t})}^{1-\gamma}\phi(t)&=\lim_{n\to\infty}\phi_{n}(t)\\
&=x_0+{(\log{t})}^{1-\gamma}\lim_{n\to\infty}\frac{1}{\Gamma(\alpha)}\int_{1}^{t}{{\big(\log{\frac{t}{s}}\big)}^{\alpha-1}}{(\log{s})}^{k}\big({(\log{s})}^{-k}f(s,\phi_{n-1}(s))\big)\frac{ds}{s}\\
&=x_0+{(\log{t})}^{1-\gamma}\frac{1}{\Gamma(\alpha)}\int_{1}^{t}{{\big(\log{\frac{t}{s}}\big)}^{\alpha-1}}{(\log{s})}^{k}\lim_{n\to\infty}\big({(\log{s})}^{-k}f(s,\phi_{n-1}(s))\big)\frac{ds}{s}\\
&=x_0+{(\log{t})}^{1-\gamma}\frac{1}{\Gamma(\alpha)}\int_{1}^{t}{{\big(\log{\frac{t}{s}}\big)}^{\alpha-1}}f(s,\phi(s))\frac{ds}{s}.
\end{align*}
Then $\phi$ is a continuous solution of integral equation \eqref{s2} defined on $J.$

Now we will prove uniqueness of solution $\phi(t).$ For this, suppose that $\psi(t)$ defined on $I$ is also a solution of integral equation \eqref{s2}. Then ${(\log{t})}^{1-\gamma}|\psi(t)|\leq b$ for all $t\in I$ and
\begin{equation*}
\psi(t)=x_0{(\log{t})}^{\gamma-1}+\frac{1}{\Gamma(\alpha)}\int_{1}^{t}{{\big(\log{\frac{t}{s}}\big)}^{\alpha-1}}f(s,\phi(s))\frac{ds}{s},\quad t\in I.
\end{equation*}
It is sufficient to prove $\phi(t)\equiv\psi(t)$ on $I.$ From \textbf{(H1)}, there exists a $k>(\beta(1-\alpha)-1)$ and $M\geq0$ such that
\begin{equation*}
  |f(t,\psi(t))|=\big|f\big(t,{(\log{t})}^{\gamma-1}{(\log{t})}^{1-\gamma}\psi(t)\big)\big|\leq M{(\log{t})}^{k},
\end{equation*}
for all $t\in I.$ Therefore
\begin{align*}
{(\log{t})}^{1-\gamma}|\phi_{0}(t)-\psi(t)|=&{(\log{t})}^{1-\gamma}\bigg|\frac{1}{\Gamma(\alpha)}\int_{1}^{t}{{\big(\log{\frac{t}{s}}\big)}^{\alpha-1}}f(s,\psi(s))\frac{ds}{s}\bigg|\\
&\leq{(\log{t})}^{1-\gamma}\frac{1}{\Gamma(\alpha)}\int_{1}^{t}{{\big(\log{\frac{t}{s}}\big)}^{\alpha-1}}M{(\log{s})}^{k}\frac{ds}{s}\\
&=M{(\log{t})}^{\alpha+k+1-\gamma}\frac{\Gamma(k+1)}{\Gamma(\alpha+k+1)}.
\end{align*}
Furthermore
\begin{align*}
{(\log{t})}^{1-\gamma}|\phi_{1}(t)-\psi(t)|=&{(\log{t})}^{1-\gamma}\bigg|\frac{1}{\Gamma(\alpha)}\int_{1}^{t}{{\big(\log{\frac{t}{s}}\big)}^{\alpha-1}}[f(s,\phi_0(s))-f(s,\psi(s))]\frac{ds}{s}\bigg|\\
&\leq AM \frac{\Gamma(k+1)}{\Gamma(\alpha+k+1)}\frac{\Gamma(\alpha+2k+2-\gamma)}{\Gamma(2\alpha+2k+2-\gamma)}{(\log{t})}^{2(\alpha+k+1-\gamma)}.
\end{align*}
By the induction hypothesis, we suppose that
\begin{equation*}
{(\log{t})}^{1-\gamma}|\phi_{n}(t)-\psi(t)|\leq A^{n}M{(\log{t})}^{(n+1)(\alpha+k+1-\gamma)}\prod_{i=0}^{n}\frac{\Gamma((i+1)k+i(\alpha+1-\gamma)+1)}{\Gamma((i+1)(\alpha+k)+i(1-\gamma)+1)}.
\end{equation*}
Then
\begin{align*}
{(\log{t})}^{1-\gamma}|\phi_{n+1}(t)-\psi(t)|\leq&{(\log{t})}^{1-\gamma}\bigg|\frac{1}{\Gamma(\alpha)}\int_{1}^{t}{{\big(\log{\frac{t}{s}}\big)}^{\alpha-1}}[f(s,\phi_n(s))-f(s,\psi(s))]\frac{ds}{s}\bigg|\\
\leq& A^{n+1}M{(\log{t})}^{(n+2)(\alpha+k+1-\gamma)}\prod_{i=0}^{n+1}\frac{\Gamma((i+1)k+i(\alpha+1-\gamma)+1)}{\Gamma((i+1)(\alpha+k)+i(1-\gamma)+1)}\\
\leq& A^{n+1}Ml^{(n+2)(\alpha+k+1-\gamma)}\prod_{i=0}^{n+1}\frac{\Gamma((i+1)k+i(\alpha+1-\gamma)+1)}{\Gamma((i+1)(\alpha+k)+i(1-\gamma)+1)}.
\end{align*}
Using the same arguments used in the proof of Theorem 3, we obtain the series
\begin{equation*}
  \sum_{n=1}^{\infty}A^{n+1}Ml^{(n+2)(\alpha+k+1-\gamma)}\prod_{i=0}^{n+1}\frac{\Gamma((i+1)k+i(\alpha+1-\gamma)+1)}{\Gamma((i+1)(\alpha+k)+i(1-\gamma)+1)}
\end{equation*}
is convergent. Thus $A^{n+1}Ml^{(n+2)(\alpha+k+1-\gamma)}\prod_{i=0}^{n+1}\frac{\Gamma((i+1)k+i(\alpha+1-\gamma)+1)}{\Gamma((i+1)(\alpha+k)+i(1-\gamma)+1)}\to0$ as $n\to\infty.$ Also we observe that $\lim_{n\to\infty}{(\log{t})}^{1-\gamma}\phi_n(t)={(\log{t})}^{1-\gamma}\psi(t)$ uniformly on $J.$ Thus $\phi(t)\equiv\psi(t)$ on $I.$
\end{proof}

\begin{thm}
Suppose that \textbf{(H1)}-\textbf{(H2)} holds. Then the IVP \eqref{s1} has a unique continuous solution $\phi$ defined on $I$ and $\phi(t)={(\log{t})}^{\gamma-1}\lim_{n\to\infty}{(\log{t})}^{1-\gamma}\phi_{n}(t)$ with $\phi_0(t)$ and $\phi_n(t)$ defined by \eqref{pfc}.
\end{thm}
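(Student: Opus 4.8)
The plan is to assemble this final statement directly from the three preceding results: the equivalence in Lemma 3, the uniform convergence in Theorem 3, and the existence–uniqueness for the integral equation in Theorem 4. Almost all of the analytic work is already contained in those statements, so the role of this theorem is to transport the conclusion back from the Volterra integral equation \eqref{s2} to the original differential problem \eqref{s1}. Concretely, I would open by invoking Theorem 3 to guarantee that the sequence $\{(\log t)^{1-\gamma}\phi_n(t)\}$, built from the Picard iterates \eqref{pfc}, converges uniformly on $J$; this makes the limit function well defined and ensures that $\phi(t)={(\log t)}^{\gamma-1}\lim_{n\to\infty}{(\log t)}^{1-\gamma}\phi_n(t)$ is continuous on $I$, since a uniform limit of continuous functions is continuous and multiplication by the fixed factor ${(\log t)}^{\gamma-1}$ preserves continuity on $I$.

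Next I would apply Theorem 4, which identifies precisely this $\phi$ as the unique continuous solution of the integral equation \eqref{s2} on $J$. With that in hand, the final step is to pass from \eqref{s2} to \eqref{s1} using the equivalence established in Lemma 3: under hypothesis \textbf{(H1)}, a function $x:I\to\R$ solves \eqref{s2} if and only if it solves the IVP \eqref{s1} on $J$. Since $\phi$ solves \eqref{s2}, the forward direction of Lemma 3 immediately yields that $\phi$ is a continuous solution of \eqref{s1}.

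For uniqueness I would argue by the contrapositive transport: if $\psi$ is any continuous solution of the IVP \eqref{s1} on $I$, then the converse direction of Lemma 3 shows $\psi$ is a solution of the integral equation \eqref{s2}; but Theorem 4 asserts that \eqref{s2} admits exactly one continuous solution on $J$, namely $\phi$, whence $\psi\equiv\phi$ on $I$. This closes the uniqueness claim without any new estimate. I do not anticipate a genuine analytic obstacle here, as the hard convergence estimate (the ratio test on the Gamma-function product via Lemma 2) was already discharged in Theorem 3; the only point demanding care is purely bookkeeping, namely checking that both implications of the equivalence in Lemma 3 apply to the specific limit function $\phi$ produced by the Picard scheme, so that the unique integral-equation solution and the unique IVP solution are literally the same object and both live in the intended space $C^{0}(I)$.
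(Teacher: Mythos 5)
Your proposal is correct and follows essentially the same route as the paper, which likewise disposes of this theorem in one line by combining the equivalence of Lemma 3 with the existence--uniqueness result for the integral equation (Theorems 3 and 4); you simply spell out the bookkeeping that the paper leaves implicit. No gap.
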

\begin{proof} From Lemma 3 and Theorem 3, we can easily obtain that $\phi(t)={(\log{t})}^{\gamma-1}\lim_{n\to\infty}{(\log{t})}^{1-\gamma}\phi_n(t)$ is a unique continuous solution of IVP \eqref{s1} defined on $I$. Thus the proof is ended here.
\end{proof}
\section{Concluding remarks} We considered a new class of IVP for fractional differential problems with Hilfer-Hadamard fractional derivative. A new criteria for the local existence and uniqueness of solution is discussed. Then uniform convergence of such a local solution is obtained with Picard iterative method and a computable sequences are given for approximating the solutions.

\end{document}